\newtheorem{theorem}{Theorem}
\newtheorem{cor}{Corollary}
\newtheorem*{proof*}{Proof}
\begin{document}
\title{Study of sensitivity of parameters of Bernstein-Stancu operators}
\author[1, 3, b]{R.B. Gandhi}
\author[1, 2, a, *]{Vishnu Narayan Mishra}
\affil[1]{\scriptsize Department of Applied Mathematics \& Humanities, Sardar Vallabhbhai National Institute of Technology, Ichchhanath Mahadev-Dumas Road, Surat - 395 007, (Gujarat), India}
\affil[2]{\scriptsize L. 1627 Awadh Puri Colony Beniganj, Phase-III, Opposite - Industrial Training Institute (ITI), Ayodhya Main Road, Faizabad, Uttar Pradesh 224 001, India}
\affil[3]{\scriptsize Department of Mathematics, BVM Engineering College, Vallabh Vidyanagar - 388 120, (Gujarat), India}
\date{}

\maketitle
    
\let\thefootnote\relax\footnotetext{\scriptsize E-mails: vishnunarayanmishra@gmail.com$^{a}$; rajiv55in@yahoo.com$^{b}$  \\ \scriptsize $^{*}$Corresponding author}
\begin{abstract}
\noindent \scriptsize This paper is aimed at studying sensitivity of parameters $\alpha$ and $\beta$ appearing in the operators introduced by D.D. Stancu \cite{Sta} in 1969. Results are established on the behavior the nodes used in Bernstein-Stancu polynomials and the nodes used in Bernstein polynomials and graphical presentations of them are generated. Alternate proof of uniform convergence of Bernstein-Stancu operators and an upper bound estimation are derived. It is also established that the parameters $\alpha$ and $\beta$ in Bernstein-Stancu polynomials can be used to get better approximation at a point $x = \alpha/\beta$ in $[0,1]$ to the Bernstein polynomials. \\ 

\noindent \textbf{Key Words}: Bernstein operators, Bernstein-Stancu operators.
\\ \\
\textit{Mathematical Subject Classification – MSC2010 : \medskip 41A10, 41A36}. 

\end{abstract}

\section{Introduction}
Let $f$ be a continuous function, $f: [0,1] \rightarrow \mathbb{R}$. For every natural number $n$, we denote by $B_n$, the Bernstein polynomial of degree $n$, defined as
\begin{equation}\label{Ber}
B_n(f;x)=\sum_{k=0}^{n} b_{n,k}(x)f\left(\dfrac{k}{n} \right),
\end{equation}
where
\begin{equation}\label{BerBas}
b_{n,k}(x)= \left(\begin{array}{c} n \\ k  \end{array} \right)x^k (1-x)^{n-k}, \quad k=0,1, \ldots, n.
\end{equation}
In 1969, D.D. Stancu \cite{Sta} introduced a linear positive operator involving two non-negative parameters $\alpha$ and $\beta$ satisfying the condition $0 \leq \alpha \leq \beta$, as follow:\\
For every continuous function $f$ on $[0,1]$ and for every $n \in \mathbb{N}$ the operator is denoted by $B_n^{\alpha, \beta}$ and is given by 
\begin{equation}\label{Sta}
B_n^{\alpha,\beta}(f;x)=\sum_{k=0}^{n} b_{n,k}(x)f\left(\dfrac{k+\alpha}{n+\beta} \right),
\end{equation}
where $b_{n,k}$'s are given by (\ref{BerBas}). The operators given by (\ref{Sta}) are called the Bernstein-Stancu operators (or polynomials)\cite{Alt}. They are linear positive operators and generalization of the Bernstein operators in the sense that for $\alpha = 0, \beta = 0$, the Bernstein-Stancu operators reduce to Bernstein operators. A vast literature is available about the Bernstein polynomial, its different generalizations, modifications and graphical representations of them. We refer the readers to \cite{Buy}-\cite{Moh1}, \cite{RBG1}-\cite{VNM2} and references therein. \\
The Bernstein-Stancu operators use the equidistant nodes $x_0 = \dfrac{\alpha}{n+\beta}, x_1 = x_0+h, \ldots x_n = x_0 + nh$, where $h = \dfrac{1}{n+\beta}$. As $B_n^{\alpha,\beta}(f;0)=f\left(\dfrac{\alpha}{n+\beta}\right)$ and $B_n^{\alpha,\beta}(f;1)=f\left(\dfrac{n+\alpha}{n+\beta}\right)$, $B_n^{\alpha,\beta}(f; \cdot)$ interpolates function $f$ in $x=0$ if $\alpha=0$ and in $x=1$ if $\alpha=\beta$. \\
For the test functions $e_i(t)=t^i$, where $t \in [0,1]$ and $i=0,1,2$; we have
\begin{equation}\label{Test0}
B_n^{\alpha,\beta}(e_0;x)=1,
\end{equation}
\begin{equation}\label{Test1}
B_n^{\alpha,\beta}(e_1;x)=x+\dfrac{\alpha-\beta x}{n + \beta},
\end{equation}
\begin{eqnarray}\label{Test2}
 B_n^{\alpha,\beta}(e_2;x) = x^2 +  \dfrac{nx(1-x) + (\alpha-\beta x)(2nx + \beta x + \alpha)}{(n + \beta)^2},
\end{eqnarray}
\noindent As an immediate consequence of (\ref{Test0}), (\ref{Test1}), (\ref{Test2}) and from the Popoviciu-Bohman-Korovkin theorem\cite{Alt}, we can state that for any $f \in C([0,1])$ the sequence $\left(B_n^{\alpha,\beta}(f;\cdot)\right)_{n \in \mathbb{N}}$ converges uniformly to $f$.
\\
\noindent For given $f \in C([0,1])$, $x_1, x_2 \in [0,1]$ and $\delta > 0$,

\begin{equation}\label{Mod}
\omega(f;\delta)= \max_{\left|x_1 - x_2\right| \leq \delta} \left\lbrace\left|f(x_1) - f(x_2)\right|\right\rbrace,
\end{equation}
denotes the modulus of continuity of $f$.
\section{Main Results}
The nodes, for fixed $n \in \mathbb{N}$, for the Bernstein operators (\ref{Ber}) are 
\begin{equation}\label{Node1}
0=\dfrac{0}{n},\dfrac{1}{n}, \dfrac{2}{n}, \ldots, \dfrac{n-1}{n}, \dfrac{n}{n}=1,
\end{equation}
while for the Bernstein-Stancu operators (\ref{Sta}), they are 
\begin{equation}\label{Node2}
\dfrac{\alpha}{n + \beta},\dfrac{1 + \alpha}{n + \beta}, \dfrac{2 + \alpha}{n + \beta}, \ldots, \dfrac{n + \alpha}{n + \beta}.
\end{equation} 
For fixed $n \in \mathbb{N}$, the nodes given by (\ref{Node1}) will be denoted by $\left(\dfrac{k}{n}\right)$ and the nodes given by  (\ref{Node2}) will be denoted by $\left(\dfrac{k+\alpha}{n+\beta}\right)$, where $k=0,1,\ldots,n$; $\alpha, \beta$ are two non-negative numbers with $0 \leq \alpha \leq \beta$.
\begin{theorem}\label{T1}
The nodes $\left(\dfrac{k+\alpha}{n+\beta}\right)$ are distributed as evenly as the nodes $\left(\dfrac{k}{n}\right)$ when $n \rightarrow \infty$, for any $0 \leq \alpha \leq \beta$.
\end{theorem}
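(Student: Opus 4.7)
The plan is to unpack the phrase "distributed as evenly as" into two concrete asymptotic statements that together capture the informal meaning: (a) the uniform spacings of the two families approach one another, and (b) the nodes themselves, matched index by index, approach one another uniformly in $k$. Since both families are arithmetic progressions for fixed $n$, each grid is already perfectly equidistant; what has to be shown is that the geometric mismatch between the two progressions vanishes as $n\to\infty$.

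First, I would compare the consecutive gaps. For the Bernstein family $(k/n)$ the common spacing is $h_B=1/n$, while for the Stancu family $((k+\alpha)/(n+\beta))$ it is $h_S=1/(n+\beta)$. Hence
\[
h_B-h_S=\frac{\beta}{n(n+\beta)}\longrightarrow 0\quad\text{as }n\to\infty,
\]
so the two equidistant grids have asymptotically identical spacings. Second, I would control the pointwise discrepancy by a direct subtraction,
\[
\frac{k+\alpha}{n+\beta}-\frac{k}{n}=\frac{n\alpha-k\beta}{n(n+\beta)}.
\]
For $0\le k\le n$ and $0\le\alpha\le\beta$, both $n\alpha$ and $k\beta$ lie in $[0,n\beta]$, so the numerator is bounded in absolute value by $n\beta$, giving
\[
\max_{0\le k\le n}\left|\frac{k+\alpha}{n+\beta}-\frac{k}{n}\right|\le\frac{\beta}{n+\beta}\longrightarrow 0.
\]
As a side consequence, the extreme Stancu nodes $\alpha/(n+\beta)$ and $(n+\alpha)/(n+\beta)$ tend to $0$ and $1$, so the Stancu grid asymptotically fills $[0,1]$ just as the Bernstein grid does.

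The only genuine obstacle is interpretive rather than technical: once one agrees that "evenly distributed" is captured by the vanishing of both the spacing mismatch and the node-by-node displacement, the remaining work is the two elementary estimates above. I would close the proof by remarking that both convergences occur at the rate $O(1/n)$, uniformly in the index $k$, which is the strongest sense in which the theorem can be read.
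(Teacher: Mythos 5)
Your proposal is correct and follows essentially the same route as the paper: both rest on the identity $\frac{k+\alpha}{n+\beta}-\frac{k}{n}=\frac{n\alpha-k\beta}{n(n+\beta)}$ and a uniform-in-$k$ bound tending to $0$ (yours gives the slightly sharper constant $\beta/(n+\beta)$ versus the paper's $(\alpha+\beta)/(n+\beta)$). Your additional comparison of the common spacings $1/n$ and $1/(n+\beta)$ is a harmless refinement not present in the paper.
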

\begin{proof}
For $k=0$, $\lim_{n \rightarrow \infty} \dfrac{k+\alpha}{n+\beta}=0=\lim_{n \rightarrow \infty}\dfrac{k}{n}$. \\ \indent  \indent
For $k=n$, $\lim_{n \rightarrow \infty} \dfrac{n+\alpha}{n+\beta}=1=\lim_{n \rightarrow \infty}\dfrac{n}{n}$. \\
Also,
\begin{equation}\label{E1}
\dfrac{k + \alpha}{n + \beta} - \dfrac{k}{n} = \dfrac{n\alpha - k\beta}{n(n+\beta)},
\end{equation}
and $\dfrac{\left|n\alpha - k\beta\right|}{n(n+\beta)}\leq \dfrac{\alpha + \beta}{n+\beta}\rightarrow 0$ as $n \rightarrow \infty$, which proves the theorem.
\end{proof}
\noindent As a consequence of Theorem \ref{T1}, we have an alternate proof of uniform convergence of $\left(B_n^{\alpha,\beta}(f;\cdot)\right)_{n \in \mathbb{N}}$ to $f \in C([0,1])$. 
\begin{cor}\label{C1}
For $f \in C([0,1])$, the sequence $\left(B_n^{\alpha,\beta}(f;\cdot)\right)_{n \in \mathbb{N}}$ converges uniformly to $f$ in $[0,1]$.
\end{cor}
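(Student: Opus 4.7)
The plan is to reduce the Bernstein--Stancu convergence to the classical Bernstein convergence by inserting $B_n(f;x)$ as an intermediate term and writing
\[
\left|B_n^{\alpha,\beta}(f;x)-f(x)\right|\;\le\;\left|B_n^{\alpha,\beta}(f;x)-B_n(f;x)\right|+\left|B_n(f;x)-f(x)\right|.
\]
The second piece is handled by the classical Bernstein theorem, which gives uniform convergence on $[0,1]$ and can be quoted as a known fact. All the work is in controlling the first piece, and this is where Theorem \ref{T1} enters.

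For the first piece, I would use that both operators share the same basis weights $b_{n,k}(x)$ but differ only in the evaluation nodes, so
\[
B_n^{\alpha,\beta}(f;x)-B_n(f;x)=\sum_{k=0}^{n}b_{n,k}(x)\left[f\!\left(\tfrac{k+\alpha}{n+\beta}\right)-f\!\left(\tfrac{k}{n}\right)\right].
\]
The key input from the proof of Theorem \ref{T1} is the uniform estimate
\[
\left|\tfrac{k+\alpha}{n+\beta}-\tfrac{k}{n}\right|\le\tfrac{\alpha+\beta}{n+\beta},
\]
which is independent of both $k$ and $x$ and tends to $0$ as $n\to\infty$.

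Since $f\in C([0,1])$ is uniformly continuous, for any $\varepsilon>0$ there is a $\delta>0$ such that $|u-v|<\delta$ implies $|f(u)-f(v)|<\varepsilon$. Choosing $n$ large enough that $(\alpha+\beta)/(n+\beta)<\delta$ forces each bracketed term to have absolute value less than $\varepsilon$. Using $\sum_{k=0}^n b_{n,k}(x)=1=B_n^{\alpha,\beta}(e_0;x)$, this gives $|B_n^{\alpha,\beta}(f;x)-B_n(f;x)|<\varepsilon$ uniformly in $x\in[0,1]$. Combining with the classical bound on $|B_n(f;x)-f(x)|$ yields the claim.

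There is no real obstacle; the only step requiring care is keeping track of uniformity, namely verifying that the bound $(\alpha+\beta)/(n+\beta)$ is independent of both $k$ and $x$ so that the resulting estimate on $|B_n^{\alpha,\beta}(f;x)-B_n(f;x)|$ is genuinely uniform on $[0,1]$, and stating up front that $f$ is uniformly continuous (a consequence of continuity on the compact set $[0,1]$) to justify selecting a single $\delta$ that works at every node.
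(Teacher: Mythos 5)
Your proposal is correct and follows essentially the same route as the paper's proof: the triangle inequality through $B_n(f;x)$, the uniform node estimate $\left|\tfrac{k+\alpha}{n+\beta}-\tfrac{k}{n}\right|\le\tfrac{\alpha+\beta}{n+\beta}$ from the proof of Theorem \ref{T1}, uniform continuity of $f$ together with $\sum_{k=0}^n b_{n,k}(x)=1$, and the classical Bernstein convergence for the remaining term. Your write-up is in fact slightly more explicit about the $\varepsilon$--$\delta$ bookkeeping and the uniformity in $x$ than the paper's version, but the argument is the same.
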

\begin{proof}
We observe that 
\begin{eqnarray*}
&&\left|B_n^{\alpha,\beta}(f;x)-B_n(f;x)\right| \\ && = \left|\sum_{k=0}^n b_{n,k}(x) \left[f\left(\dfrac{k + \alpha}{n + \beta} \right) - f\left(\dfrac{k}{n}\right) \right]  \right| \\ && \leq  \sum_{k=0}^n b_{n,k}(x) \left|f\left(\dfrac{k + \alpha}{n + \beta} \right) - f\left(\dfrac{k}{n}\right)\right|.
\end{eqnarray*}
Using uniform continuity of $f$, (\ref{E1}) and $\sum_{k=0}^n b_{n,k}(x) = 1$, we have,
\begin{eqnarray*}
\left|B_n^{\alpha,\beta}(f;x)-B_n(f;x)\right| \rightarrow 0, \quad n \rightarrow \infty.
\end{eqnarray*}
Now, using the uniform convergence of $B_n(f;\cdot)$ to $f \in C([0,1])$ (refer \cite{Alt}), we get
\begin{eqnarray*}
\left|B_n^{\alpha,\beta}(f;x)-f(x)\right|  \leq  \left|B_n^{\alpha,\beta}(f;x)-B_n(f;x)\right|  + \left|B_n(f;x)-f(x)\right| \rightarrow 0, 
\end{eqnarray*}
as $n \rightarrow \infty$.
\end{proof}
\begin{figure}[h!]
\centering
\includegraphics[width=12cm,height=4.25cm]{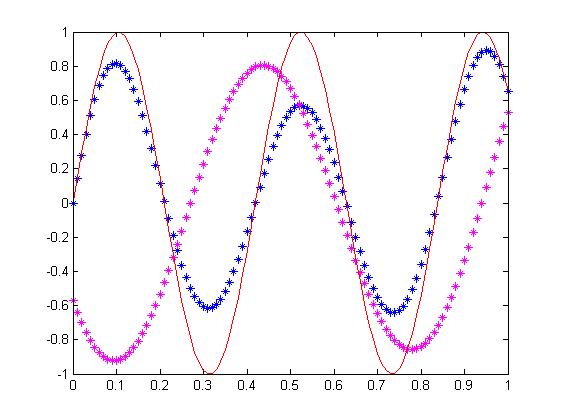}
	\caption{\scriptsize \sl Graphs of $f(x) = \sin (15x)$ ("Red"), Bernstein polynomial of $f$ ("Blue") and Bernstein-Stancu polynomial of $f$ with $\alpha=20$, $\beta=30$ ("Magenta") for $n = 50$ on $[0,1]$}
\label{F1}
\end{figure}
\begin{figure}[h!]
\centering
\includegraphics[width=12cm,height=4.25cm]{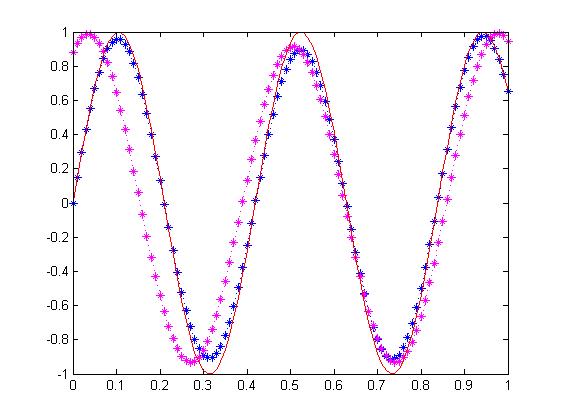}
	\caption{\scriptsize \sl Graphs of $f(x) = \sin (15x)$ ("Red"), Bernstein polynomial of $f$ ("Blue") and Bernstein-Stancu polynomial of $f$ with $\alpha=20$, $\beta=30$ ("Magenta") for $n = 250$ on $[0,1]$}
\label{F2}
\end{figure}
\noindent In figures \ref{F1} and \ref{F2}, function $f(x) = \sin (15x) $, its Bernstein polynomials and Bernstein-Stancu polynomials of degrees $n= 50$ and $n=250$, respectively, are plotted over $[0,1]$. It can be seen that for the higher value of $n$, the convergence is better.
\\
\noindent In the following corollary an upper estimate for the Bernstein-Stancu operators is derived.
\begin{cor}\label{C2}
For $f \in C([0,1])$ and $x \in [0,1]$, 
\begin{eqnarray*}
\vert B_n^{\alpha,\beta}(f;x)-f(x)\vert  \leq  c \cdot \omega(f ; n^{-1/2}),
\end{eqnarray*}
where $\omega(f;\delta)$ is the modulus of continuity of $f$ given by (\ref{Mod}) and $c$ is a constant depending upon $\alpha, \beta$.
\end{cor}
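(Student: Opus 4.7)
The plan is to follow the classical Popoviciu-type scheme for bounding a positive linear operator in terms of the modulus of continuity, using the moment identities already displayed as (\ref{Test0})--(\ref{Test2}). Since $B_n^{\alpha,\beta}$ reproduces constants, I would start by writing
\begin{eqnarray*}
|B_n^{\alpha,\beta}(f;x)-f(x)| \le \sum_{k=0}^n b_{n,k}(x)\Bigl|f\Bigl(\tfrac{k+\alpha}{n+\beta}\Bigr)-f(x)\Bigr|,
\end{eqnarray*}
and then invoke the standard property of $\omega$, namely $|f(u)-f(v)|\le\omega(f;|u-v|)\le\bigl(1+|u-v|/\delta\bigr)\omega(f;\delta)$ for any $\delta>0$, to convert the right-hand side into
\begin{eqnarray*}
\omega(f;\delta)\Bigl(1+\tfrac{1}{\delta}\sum_{k=0}^{n}b_{n,k}(x)\bigl|\tfrac{k+\alpha}{n+\beta}-x\bigr|\Bigr).
\end{eqnarray*}

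Next I would handle the first absolute moment by Cauchy--Schwarz, using $\sum_k b_{n,k}(x)=1$, so that
\begin{eqnarray*}
\sum_{k=0}^{n}b_{n,k}(x)\bigl|\tfrac{k+\alpha}{n+\beta}-x\bigr|\le\sqrt{B_n^{\alpha,\beta}\bigl((e_1-x)^2;x\bigr)}.
\end{eqnarray*}
The second central moment $B_n^{\alpha,\beta}((e_1-x)^2;x)=B_n^{\alpha,\beta}(e_2;x)-2xB_n^{\alpha,\beta}(e_1;x)+x^2$ can be simplified directly from (\ref{Test1}) and (\ref{Test2}); the cross terms involving $(\alpha-\beta x)$ collapse and one obtains the clean expression
\begin{eqnarray*}
B_n^{\alpha,\beta}\bigl((e_1-x)^2;x\bigr)=\frac{nx(1-x)+(\alpha-\beta x)^2}{(n+\beta)^2}.
\end{eqnarray*}

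Using $x(1-x)\le 1/4$ on $[0,1]$ and $|\alpha-\beta x|\le\beta$ (since $0\le\alpha\le\beta$), together with $(n+\beta)^2\ge n^2$, this is bounded above by $\frac{1}{4n}+\frac{\beta^2}{n^2}$. Choosing $\delta=n^{-1/2}$ then yields
\begin{eqnarray*}
\tfrac{1}{\delta}\sqrt{B_n^{\alpha,\beta}((e_1-x)^2;x)}\le\sqrt{\tfrac{1}{4}+\tfrac{\beta^{2}}{n}}\le\sqrt{\tfrac{1}{4}+\beta^{2}}
\end{eqnarray*}
(for $n\ge 1$), and putting everything together gives the stated estimate with $c=1+\sqrt{1/4+\beta^2}$, a constant depending only on $\beta$ (and trivially on $\alpha$ through the constraint $\alpha\le\beta$).

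The only place that requires genuine care, rather than routine bookkeeping, is the algebraic simplification of the second central moment: the terms coming from (\ref{Test2}) look cumbersome, but after subtracting $2x\,B_n^{\alpha,\beta}(e_1;x)$ the piece $(\alpha-\beta x)(2nx+\beta x+\alpha)$ combines with $-2x(\alpha-\beta x)(n+\beta)$ to leave just $(\alpha-\beta x)^2$. If this cancellation is verified carefully, the rest of the argument is a mechanical application of Cauchy--Schwarz and the basic modulus-of-continuity inequality.
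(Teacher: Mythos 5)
Your proof is correct, and it takes a genuinely different route from the paper's. The paper proves the corollary by inserting the ordinary Bernstein polynomial as an intermediate term: it writes $\vert B_n^{\alpha,\beta}(f;x)-f(x)\vert \leq \vert B_n^{\alpha,\beta}(f;x)-B_n(f;x)\vert + \vert B_n(f;x)-f(x)\vert$, bounds the first piece by $\omega\left(f;\tfrac{\alpha+\beta}{n+\beta}\right)$ using the node-distance estimate (\ref{E1}), and disposes of the second piece by citing the known quantitative result $\vert B_n(f;x)-f(x)\vert \leq c_1\,\omega(f;n^{-1/2})$ from the literature. That argument is shorter but leans on an external constant $c_1$ and never touches the moments (\ref{Test1})--(\ref{Test2}). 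You instead run the self-contained Popoviciu/Shisha--Mond scheme: the inequality $\vert f(u)-f(v)\vert \leq \left(1+\vert u-v\vert/\delta\right)\omega(f;\delta)$, Cauchy--Schwarz on the first absolute moment, and an explicit computation of the second central moment, which you correctly simplify to $\dfrac{nx(1-x)+(\alpha-\beta x)^2}{(n+\beta)^2}$ (the cancellation you flag does work out, as one can also see by writing $\tfrac{k+\alpha}{n+\beta}-x=\tfrac{(k-nx)+(\alpha-\beta x)}{n+\beta}$ and using $\sum_k b_{n,k}(x)(k-nx)=0$). Your approach buys an explicit, checkable constant $c=1+\sqrt{1/4+\beta^2}$ and independence from the cited Bernstein estimate, at the cost of slightly more computation; the paper's approach buys brevity and reuses the machinery of Corollary \ref{C1}. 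Both establish the stated bound.
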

\begin{proof}
We observe that
\begin{eqnarray*}
\left|B_n^{\alpha,\beta}(f;x)-f(x)\right| & \leq & \left|B_n^{\alpha,\beta}(f;x)-B_n(f;x)\right| + \left|B_n(f;x)-f(x)\right| \\ & \leq & \sum_{k=0}^n b_{n,k}(x)\left|f\left(\dfrac{k + \alpha}{n + \beta} \right) - f\left(\dfrac{k}{n} \right)\right|  + \left|B_n(f;x)-f(x)\right|.
\end{eqnarray*}
Now, from (\ref{E1})
\begin{eqnarray*}
\left|\dfrac{k + \alpha}{n + \beta}  - \dfrac{k}{n} \right| & \leq & \dfrac{\left|n\alpha - k\beta\right|}{n(n+\beta)}\leq \dfrac{\alpha + \beta}{n+\beta}
\end{eqnarray*}
for $k=0,1,\ldots,n$, therefore
\begin{eqnarray*}
\left|B_n^{\alpha,\beta}(f;x)-f(x)\right|  & \leq & \omega\left(f;\dfrac{\alpha+\beta}{n + \beta}\right) + c_1 \cdot \omega(f ; n^{-1/2})\\ & \leq & c \cdot \omega(f ; n^{-1/2}),
\end{eqnarray*}
where $\left|B_n(f;x)-f(x)\right| \leq c_1 \cdot \omega(f ; n^{-1/2})$ (\cite{Radu}-\cite{Fin}), $c_1$ is an absolute constant and $c$ is a constant depending upon $\alpha, \beta$.\\
\end{proof}
\begin{theorem}\label{T2}
The nodes $\left(\dfrac{k+\alpha}{n+\beta}\right)$ are more closely clustered around $\dfrac{\alpha}{\beta}$, $(\beta > 0)$ then the nodes $\left(\dfrac{k}{n}\right)$.  In fact, we have $ \left| \dfrac{k + \alpha}{n + \beta} - \dfrac{\alpha}{\beta}\right| \leq \left|\dfrac{k}{n} - \dfrac{\alpha}{\beta}\right|$.
\end{theorem}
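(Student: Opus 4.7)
The plan is to reduce both differences $\frac{k+\alpha}{n+\beta} - \frac{\alpha}{\beta}$ and $\frac{k}{n} - \frac{\alpha}{\beta}$ to a common form and exhibit them as scalar multiples of one another. First I would bring each difference over a common denominator. For the Bernstein--Stancu node, clearing fractions gives
\begin{equation*}
\frac{k+\alpha}{n+\beta} - \frac{\alpha}{\beta} = \frac{\beta(k+\alpha) - \alpha(n+\beta)}{\beta(n+\beta)} = \frac{\beta k - \alpha n}{\beta(n+\beta)},
\end{equation*}
while for the Bernstein node the analogous computation gives
\begin{equation*}
\frac{k}{n} - \frac{\alpha}{\beta} = \frac{\beta k - \alpha n}{n\beta}.
\end{equation*}

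The crucial observation, which will do essentially all the work of the proof, is that the two expressions share the same numerator $\beta k - \alpha n$. Consequently, taking absolute values yields
\begin{equation*}
\left| \frac{k+\alpha}{n+\beta} - \frac{\alpha}{\beta}\right| = \frac{n}{n+\beta} \left|\frac{k}{n} - \frac{\alpha}{\beta}\right|,
\end{equation*}
and since $\beta \geq 0$, the factor $n/(n+\beta)$ lies in $(0,1]$, from which the desired inequality is immediate. (When $\beta k - \alpha n = 0$ both sides vanish and equality holds trivially, so no separate case analysis is really needed.)

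There is no substantive obstacle here; the statement is essentially an algebraic identity plus the trivial bound $n \leq n+\beta$. The only thing to be mindful of is that the step relating the two expressions requires $\beta > 0$ (so that $\alpha/\beta$ is defined), which is exactly the hypothesis stated in the theorem.
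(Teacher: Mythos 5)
Your proof is correct and rests on the same key identity as the paper's, namely (\ref{E2}): $\frac{k+\alpha}{n+\beta}-\frac{\alpha}{\beta}=\frac{n}{n+\beta}\left(\frac{k}{n}-\frac{\alpha}{\beta}\right)$. The only difference is in the last step: you take absolute values of this identity directly (using $0<n/(n+\beta)\leq 1$), which is cleaner, whereas the paper combines (\ref{E2}) with (\ref{E1}) in a sign case analysis to additionally show that $\frac{k+\alpha}{n+\beta}$ lies between $\frac{\alpha}{\beta}$ and $\frac{k}{n}$ before deducing the same inequality.
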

\begin{proof}
From (\ref{E1}), if $\dfrac{k}{n} > \dfrac{\alpha}{\beta}$, then  $n\alpha-k\beta < 0$, hence $\dfrac{k+\alpha}{n+\beta}-\dfrac{k}{n} < 0$, that is, $\dfrac{k+\alpha}{n+\beta} < \dfrac{k}{n}$. \\
Also, we have 
\begin{equation}\label{E2}
\dfrac{k + \alpha}{n + \beta} - \dfrac{\alpha}{\beta} = \dfrac{n}{n+\beta}\left(\dfrac{k}{n}-\dfrac{\alpha}{\beta} \right).
\end{equation}
So, $\dfrac{k}{n} > \dfrac{\alpha}{\beta}$ implies $\dfrac{k + \alpha}{n + \beta} > \dfrac{\alpha}{\beta}$. \\
Similarly, again using (\ref{E1}) and (\ref{E2}), if $\dfrac{k}{n} < \dfrac{\alpha}{\beta}$, then $\dfrac{k+\alpha}{n+\beta} > \dfrac{k}{n}$ and $\dfrac{k + \alpha}{n + \beta} < \dfrac{\alpha}{\beta}$. \\
Thus, for $\dfrac{k}{n} > \dfrac{\alpha}{\beta}$, we have $ \dfrac{\alpha}{\beta} < \dfrac{k + \alpha}{n + \beta
} < \dfrac{k}{n}$ and for $\dfrac{k}{n} < \dfrac{\alpha}{\beta}$, we have $\dfrac{k}{n} < \dfrac{k + \alpha}{n + \beta}<\dfrac{\alpha}{\beta}$, which concludes to $ \left| \dfrac{k + \alpha}{n + \beta} - \dfrac{\alpha}{\beta}\right| \leq \left|\dfrac{k}{n} - \dfrac{\alpha}{\beta}\right|$.
\end{proof}
\noindent Remark: Actually the nodes $\left(\dfrac{k}{n}\right)$ cross the nodes $\left(\dfrac{k+\alpha}{n+\beta}\right)$ at $\dfrac{\alpha}{\beta}$. These can be observed in the figures \ref{F3}, \ref{F4} and \ref{F5} for $\alpha =17 , \beta = 100$, $\alpha =47 , \beta = 100$ and $\alpha =77 , \beta = 100$, respectively with fixed $n = 100$. Clustering of the nodes $\left(\dfrac{k+\alpha}{n+\beta}\right)$ about $\dfrac{\alpha}{\beta}$ is also seen clearly.
\begin{figure}[h!]
\centering
\includegraphics[width=12cm,height=4.25cm]{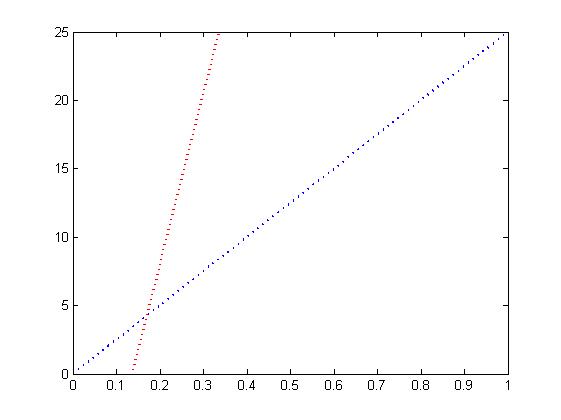}
	\caption{\scriptsize \sl Nodes clustering of Bernstein polynomial ("Blue") and Bernstein-Stancu polynomial ("Red") for $\alpha = 17, \beta = 100$, $n = 25$ on $[0,1]$.}
\label{F3}
\end{figure}
\begin{figure}[h!]
\centering
\includegraphics[width=12cm,height=4.25cm]{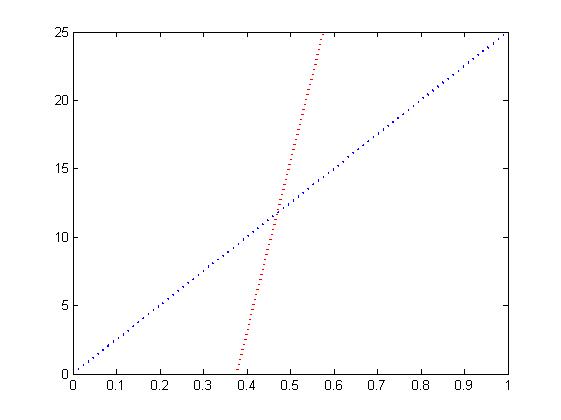}
	\caption{\scriptsize \sl Nodes clustering of Bernstein polynomial ("Blue") and Bernstein-Stancu polynomial ("Red") for $\alpha = 47, \beta = 100$, $n = 25$ on $[0,1]$.}
\label{F4}
\end{figure}
\begin{figure}[h!]
\centering
\includegraphics[width=12cm,height=4.25cm]{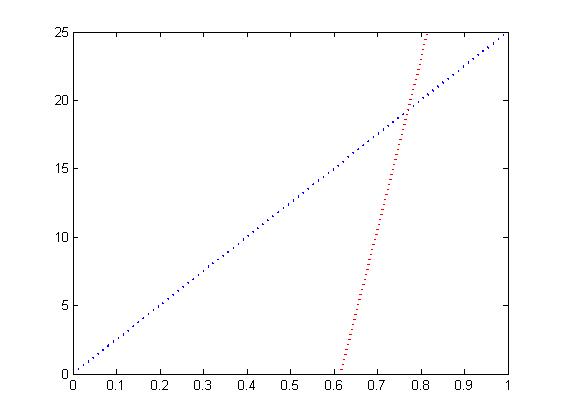}
	\caption{\scriptsize \sl Nodes clustering of Bernstein polynomial ("Blue") and Bernstein-Stancu polynomial ("Red") for $\alpha = 77, \beta = 100$, $n = 25$ on $[0,1]$.}
\label{F5}
\end{figure}
\begin{figure}[h!]
\centering
\includegraphics[width=12cm,height=4.25cm]{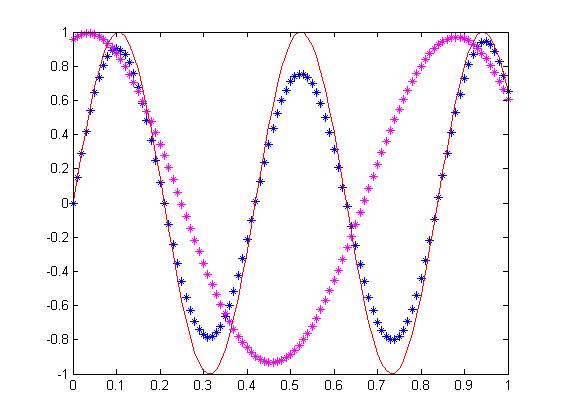}
	\caption{\scriptsize \sl Comparison of graphs of $f(x) = \sin (15x)$ ("Red") with Bernstein polynomial of $f$ ("Blue") and Bernstein-Stancu polynomial of $f$ ("Magenta") with $\alpha = 17$, $\beta = 100$ and $n = 100$.}
\label{F6}
\end{figure}
\begin{figure}[h!]
\centering
\includegraphics[width=12cm,height=4.25cm]{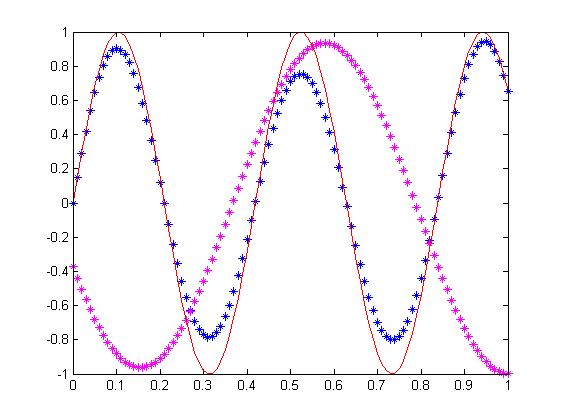}
	\caption{\scriptsize \sl Comparison of graphs of $f(x) = \sin (15x)$ ("Red") with Bernstein polynomial of $f$ ("Blue") and Bernstein-Stancu polynomial of $f$ ("Magenta") with $\alpha = 47$, $\beta = 100$ and $n = 100$.}
\label{F7}
\end{figure}
\begin{figure}[h!]
\centering
\includegraphics[width=12cm,height=4.25cm]{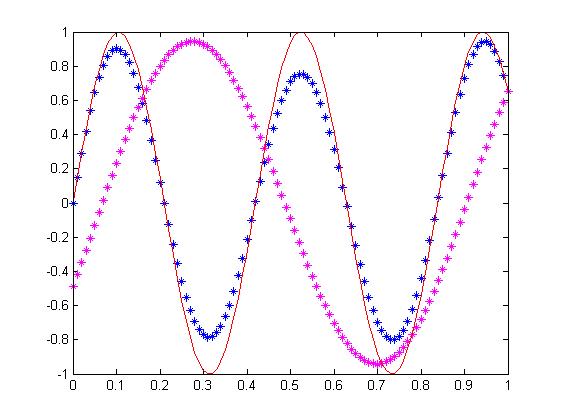}
	\caption{\scriptsize \sl Comparison of graphs of $f(x) = \sin (15x)$ ("Red") with Bernstein polynomial of $f$ ("Blue") and Bernstein-Stancu polynomial of $f$ ("Magenta") with $\alpha = 77$, $\beta = 100$ and $n = 100$.}
\label{F8}
\end{figure}
\\
\noindent Remark: In figures \ref{F6}, \ref{F7} and \ref{F8}, the graphs of function $f(x) = \sin (15x)$, its Bernstein polynomial and Bernstein-Stancu polynomial of degree $n = 100$ for the nodes plotted in figures \ref{F3}, \ref{F4} and \ref{F5}, respectively are plotted. It can be conjectured that Bernstein-Stancu polynomials can be used to get better approximation at a point to the Bernstein polynomials with an appropriate choice of parameters $\alpha, \beta$ reducing the number of terms used in calculation, which is proved in Theorem \ref{T4}.
\newpage
\begin{theorem}\label{T3}
Suppose that $\alpha_1, \alpha_2, \beta_1,$ and $\beta_2$ are non-negative numbers such that $0 \leq \alpha_1 \leq \alpha_2$, $0 \leq \beta_1 \leq \beta_2$ and $\dfrac{\alpha_1}{\beta_1} = \dfrac{\alpha_2}{\beta_2} = m$(say), then the nodes $\left(\dfrac{k+\alpha_2}{n+\beta_2}\right)$ will be more closely clustered around $m$ then $\left(\dfrac{k+\alpha_1}{n+\beta_1}\right)$.
\end{theorem}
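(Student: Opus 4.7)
The plan is to mimic the calculation that produced identity (\ref{E2}) in the proof of Theorem \ref{T2}, exploiting the hypothesis $\alpha_i = m\beta_i$ to collapse both distances to a common numerator. Concretely, for $i \in \{1,2\}$ I would compute
\[
\frac{k+\alpha_i}{n+\beta_i} - m \;=\; \frac{k+\alpha_i - m(n+\beta_i)}{n+\beta_i} \;=\; \frac{k - mn + (\alpha_i - m\beta_i)}{n+\beta_i} \;=\; \frac{k-mn}{n+\beta_i},
\]
since $\alpha_i = m\beta_i$ by hypothesis. Taking absolute values gives
\[
\left|\frac{k+\alpha_i}{n+\beta_i} - m\right| \;=\; \frac{|k - mn|}{n+\beta_i}.
\]

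From here the conclusion is immediate: because $0 \le \beta_1 \le \beta_2$, we have $n+\beta_1 \le n+\beta_2$, so the expression with $i=2$ is no larger than the expression with $i=1$, for every $k \in \{0,1,\ldots,n\}$. This gives exactly the pointwise inequality asserting that the nodes $\bigl((k+\alpha_2)/(n+\beta_2)\bigr)$ are clustered at least as tightly around $m$ as the nodes $\bigl((k+\alpha_1)/(n+\beta_1)\bigr)$.

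I do not anticipate any real obstacle: once the common factor $\alpha_i - m\beta_i$ is recognized to vanish, the theorem reduces to the trivial monotonicity $1/(n+\beta_1) \ge 1/(n+\beta_2)$. The only care needed is to make explicit use of the hypothesis $\alpha_i/\beta_i = m$ (which requires $\beta_i > 0$; the degenerate case $\beta_1 = 0$ forces $\alpha_1 = 0$ and the statement is vacuous or trivial) and to note that the conclusion is uniform in $k$, which in turn justifies the phrase \emph{more closely clustered} used in the statement.
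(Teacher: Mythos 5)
Your argument is correct and takes essentially the same route as the paper: your identity $\frac{k+\alpha_i}{n+\beta_i}-m=\frac{k-mn}{n+\beta_i}=\frac{n}{n+\beta_i}\left(\frac{k}{n}-m\right)$ is exactly the paper's (\ref{E4}) and (\ref{E5}). You simply finish by taking absolute values and using $n+\beta_1\le n+\beta_2$, whereas the paper runs a sign analysis (together with the cross-difference identity (\ref{E3})) to obtain the slightly more detailed ordering of the two families of nodes around $m$; the clustering conclusion is the same.
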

\begin{proof}
We observe that
\begin{eqnarray}\label{E3}
\dfrac{k + \alpha_1}{n+\beta_1} - \dfrac{k + \alpha_2}{n+\beta_2} = \dfrac{n(\beta_2 - \beta_1)(k/n - m)}{(n+\beta_1)(n+\beta_2)},
\end{eqnarray}
\begin{equation}\label{E4}
\dfrac{k + \alpha_1}{n+\beta_1}-m = \dfrac{n}{n + \beta_1}\left( \dfrac{k}{n} - m\right),
\end{equation}
and
\begin{equation}\label{E5}
\dfrac{k + \alpha_2}{n+\beta_2}-m = \dfrac{n}{n + \beta_2}\left( \dfrac{k}{n} - m\right).
\end{equation}
From (\ref{E3}), (\ref{E4}) and (\ref{E5}), for $\dfrac{k}{n} < m$, we have $\dfrac{k + \alpha_1}{n+\beta_1} < \dfrac{k + \alpha_2}{n+\beta_2} < m$ and for $\dfrac{k}{n} > m$, we have $\dfrac{k + \alpha_1}{n+\beta_1} > \dfrac{k + \alpha_2}{n+\beta_2} > m$.
\end{proof}
\noindent In figure \ref{F9}, we can see the clustering phenomenon of nodes with respect to the choices of parameters $\alpha,\beta$.  
\begin{figure}[h!]
\centering
\includegraphics[width=12cm,height=4.5cm]{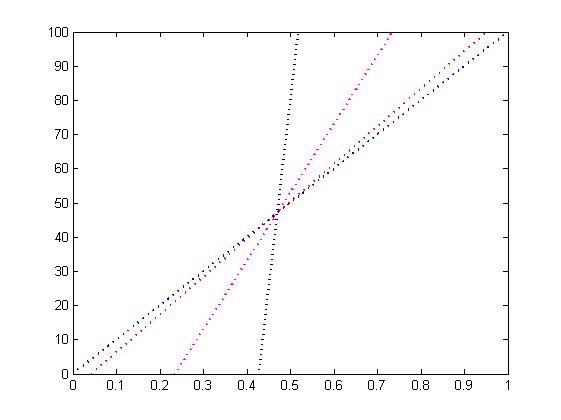}
	\caption{\scriptsize \sl Clustering of nodes of Bernstein polynomial of $f$ ("Blue") and Bernstein-Stancu polynomial of $f$ for $\alpha=4.7$, $\beta=10$ ("Red"); $\alpha=47$, $\beta=100$ ("Magenta"); $\alpha=470$, $\beta=1000$ ("Black") with fixed $n = 100$ and $m = 0.47$.}
\label{F9}
\end{figure}
\begin{figure}[h!]
\centering
\includegraphics[width=12cm,height=4.5cm]{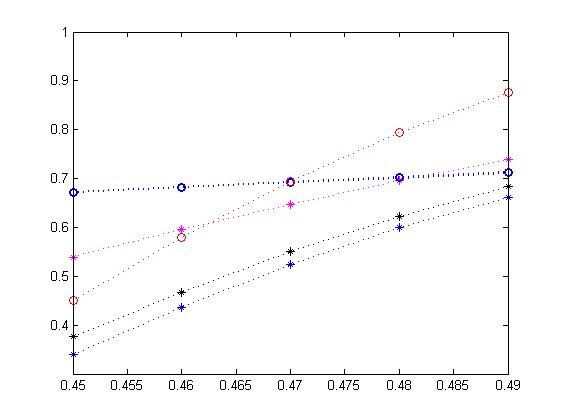}
	\caption{\scriptsize \sl Comparison of graphs of $f(x) = \sin (15x)$ ("Red o") with Bernstein polynomial of $f$ ("Blue *") and Bernstein-Stancu polynomials of $f$ for $\alpha=4.7$, $\beta=10$ ("Black *"); $\alpha=47$, $\beta=100$ ("Magenta *"); $\alpha=470$, $\beta=1000$ ("Blue o") with fixed $n = 100$ and $m = 0.47$.}
\label{F10}
\end{figure}
\\Figure \ref{F10} includes the graphs of the function $f(x)= \sin(15x)$, its Bernstein polynomials and Bernstein-Stancu polynomials for different values of parameters $\alpha, \beta$ with $m = 0.47$ of degree $n=100$. Observe the graphs near $m = 0.47$.
\begin{theorem}\label{T4}
Let $0 < \alpha < \beta$. Let $\left(\alpha_j\right)$ and $\left(\beta_j\right)$ be the increasing sequence of real numbers with $0 < \alpha_j < \beta_j$, $\alpha_j \rightarrow \infty$, as $j \rightarrow \infty$ and $\dfrac{\alpha_j}{\beta_j} = \dfrac{\alpha}{\beta}$ for all $j = 1, 2, 3, \ldots$. For a function $f$ defined on $[0,1]$ and continuous at $\dfrac{\alpha}{\beta}$ and for any fix natural number $n$, $B_n^{\alpha_l, \beta_l}$ can be brought arbitrarily closed to $f\left( \dfrac{\alpha}{\beta} \right)$, for some $0 < \alpha_l < \beta_l$.
\end{theorem}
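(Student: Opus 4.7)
The plan is to exploit the node-clustering phenomenon proved in Theorem \ref{T3}, pushed to its limit. Setting $m = \alpha/\beta$, identity (\ref{E4}) applied to $(\alpha_l,\beta_l)$ gives
\[
\frac{k+\alpha_l}{n+\beta_l}-m \;=\; \frac{n}{n+\beta_l}\left(\frac{k}{n}-m\right),
\]
and since $\beta_l\to\infty$ (it is forced to grow because $\alpha_l\to\infty$ while the ratio $\alpha_l/\beta_l$ is fixed), the factor $n/(n+\beta_l)\to 0$. With $n$ fixed, the quantity $|k/n-m|$ stays bounded by a constant $\leq 1$ for all $k=0,1,\dots,n$, so every node $(k+\alpha_l)/(n+\beta_l)$ tends to $m$ as $l\to\infty$, \emph{uniformly in $k$}.

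From this I would invoke continuity of $f$ at $m=\alpha/\beta$: given $\varepsilon>0$, pick $\delta>0$ such that $|y-m|<\delta$ implies $|f(y)-f(m)|<\varepsilon$ whenever $y\in[0,1]$. By the uniform-in-$k$ statement above, choose $l$ so large that $\left|\frac{k+\alpha_l}{n+\beta_l}-m\right|<\delta$ for every $k\in\{0,1,\dots,n\}$. This is the key step, and it is easy because there are only finitely many nodes for each fixed $n$; no uniform continuity of $f$ on all of $[0,1]$ is needed, which is important since the hypothesis only assumes continuity at the single point $m$.

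Finally, using $B_n^{\alpha_l,\beta_l}(e_0;x)=1$ (equation (\ref{Test0})), for every $x\in[0,1]$,
\[
\left|B_n^{\alpha_l,\beta_l}(f;x)-f(m)\right|
\;\leq\; \sum_{k=0}^{n} b_{n,k}(x)\left|f\!\left(\frac{k+\alpha_l}{n+\beta_l}\right)-f(m)\right|
\;<\; \varepsilon\sum_{k=0}^{n}b_{n,k}(x)\;=\;\varepsilon.
\]
Thus $B_n^{\alpha_l,\beta_l}(f;\cdot)$ can be forced within $\varepsilon$ of the constant $f(\alpha/\beta)$ on all of $[0,1]$ by choosing $l$ large enough, which proves the theorem.

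The only real subtlety I foresee is a notational one: the statement writes ``$B_n^{\alpha_l,\beta_l}$ can be brought arbitrarily close to $f(\alpha/\beta)$'' without specifying the evaluation point, so I would be explicit that the argument delivers closeness uniformly in $x\in[0,1]$ (and in particular at $x=\alpha/\beta$, matching the earlier heuristic that Bernstein--Stancu gives a better pointwise approximation near $\alpha/\beta$). There is no technical obstacle beyond the two-line limit $n/(n+\beta_l)\to 0$; the whole content of the theorem is the geometric observation that inflating $\beta_l$ while keeping $\alpha_l/\beta_l$ fixed collapses all $n+1$ nodes onto the single point $\alpha/\beta$.
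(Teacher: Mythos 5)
Your argument is correct and is essentially the paper's own proof: the paper likewise shows $\left|\frac{k+\alpha_l}{n+\beta_l}-\frac{\alpha}{\beta}\right| = \left|\frac{n(k/n-\alpha/\beta)}{n+\beta_l}\right| \to 0$ as $\beta_l \to \infty$ (you cite identity (\ref{E4}) where the paper recomputes it directly), then applies continuity at $\alpha/\beta$ and $\sum_k b_{n,k}(x)=1$ to conclude. Your added remarks — that $\beta_l\to\infty$ is forced by $\alpha_l\to\infty$ with fixed ratio, and that only finitely many nodes per fixed $n$ are involved so pointwise continuity suffices — are accurate clarifications of the same argument, not a different route.
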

\begin{proof}
Continuity of $f$ at $\dfrac{\alpha}{\beta}$ implies that for any $\epsilon > 0$, there exists $\delta > 0$, such that for any $x \in [0,1]$ with $\left|x - \dfrac{\alpha}{\beta} \right| < \delta$, we have $\left|f(x) -f\left(\dfrac{\alpha}{\beta}\right) \right| < \epsilon$. From the definition of the sequences $\left(\alpha_j\right)$ and $\left(\beta_j\right)$,  for a fixed natural number $n$ and $k = 0,1, \ldots , n$, we can find some $0 < \alpha_l < \beta_l$ such that $\left|\dfrac{k+\alpha_l}{n+\beta_l} - \dfrac{\alpha}{\beta} \right| < \delta$. Indeed, as
\begin{eqnarray*}
	\left|\dfrac{k+\alpha_l}{n+\beta_l} - \dfrac{\alpha}{\beta} \right| &=& \left|\dfrac{k\beta+\alpha_l\beta - n\alpha-\beta_l \alpha}{\beta(n+\beta_l)}\right| \\ &=& \left|\dfrac{n\left(k/n - \alpha/\beta\right)} {n+\beta_l}\right|\\& \leq &\dfrac{2n} {n+\beta_l}  \rightarrow 0
\end{eqnarray*}
when $\beta_l \rightarrow \infty$. So, 
\begin{eqnarray*}
\left| B_n^{\alpha_l, \beta_l}(f;x) - f\left(\dfrac{\alpha}{\beta}\right) \right| & =& \left|\sum_{k=0}^n b_{n,k}(x)\left[f\left(\dfrac{k + \alpha_l}{n+ \beta_l} \right) - f\left(\dfrac{\alpha}{\beta} \right) \right]\right| \\ & \leq&   \sum_{k=0}^n b_{n,k}(x)\left|f\left(\dfrac{k + \alpha_l}{n+ \beta_l} \right) - f\left(\dfrac{\alpha}{\beta} \right) \right| \\ &< &\epsilon.
\end{eqnarray*}
\end{proof}
\section{Concluding Remark} The sensitivity of parameters $\alpha, \beta$ appearing in the Bernstein-Stancu operators is observed. Graphical representations are aimed at visualization of the study better way. The approximation of the Bernstein-Stancu operators can be made based up on the choice parameters and continuity at $\dfrac{\alpha}{\beta}$ (Theorem \ref{T4}). A better approximation of a continuous function $f$ at any $x$ in $[0,1]$ can be effectively obtained using the Bernstein-Stancu operators with appropriate choice of parameters $\alpha, \beta$ to the Bernstein operators, if one wants to limit calculation to less number of terms.
\section*{Acknowledgements}
The authors are thankful to Rodin Lusan and Barbara Strazzabosco of Zentralblatt MATH, Berlin, Germany, who took all the trouble to manage us the paper \cite{Sta} from Studia UBB, Romania.


\begin{thebibliography}{99}

\bibitem{Alt} Altomare~P., Campiti~M.; \emph{Korovkin-type Approximation Theory and its applications}, Walter de Gruyter, Berlin - New York (1994).

\bibitem{Buy} B\"{u}y\"{u}kyazici~I.; \emph{Approximation by Stancu-Chlodowsky polynomials}, Comput. Math. Appl., 59, (2010),
274-282.

\bibitem{Gon} Gonska~H.H., Meier, J.; \emph{Quantitative theorems on approximation by Bernstein-Stancu operators}, Calcolo 21 (1984), 317-335.

\bibitem{Khan} Khan~H.H.; \emph{Approximation of Classes of function}, Ph.D. thesis, AMU, Aligarh (1974).

\bibitem{Lor1} Lorentz~G.G.; \emph{Approximation of Functions}, Holt, Rinehart \& Winston, Inc (1966).

\bibitem{Lor2} Lorentz~G.G.; \emph{Bernstein Polynomials}, Chelsea Publishing Company, New York (1986).

\bibitem{Moh} Mohapatra~R.N.; \emph{Quantitative results on almost convergence of a sequence of positive linear operators}, J. Approx. Theory 20 (1977),  239-250.

\bibitem{Moh1} Gurhan~I., Mohapatra~R.N.; \emph{Approximation Properties by q-Durrmeyer-Stancu Operators}, Anal. Theory Appl., 29 (2013), 373-383.

\bibitem{Radu} P\v{a}lt\v{a}nea~R.; \emph{Optimal constant in approximation by Bernstein operators}, J. Comput. Analysis Appl.
5, no. 2, Kluwer Academic, (2003), 195-235.

\bibitem{Fin} Finch~S.R.; \emph{Modulo of Continuity(2004), Mathematical Constants - additional notes}, Cambridge Univ. Press, (2009).  

\bibitem{Sta} Stancu~D.D.; \emph{Asupra unei generali\v{z}ari a polinoamelor lui Bernstein}, Studia Universi-tatis Babes-Bolyai, 14 (2) (1969), 31-45.

\bibitem{RBG1} Mishra~V.N., Gandhi~R.B.; \emph{Simultaneous approximation by Sz\'{a}sz-Mirakjan-Stancu-Durrmeyer type operators}, Period Math Hung (2017) 74, 118-127. DOI 10.1007/s10998-016-0145-0. 

\bibitem{RBG2} Mishra~V.N., Gandhi~R.B., Mohapatra~R.N.; \emph{A summation-integral type modification of Sz\'{a}sz-Mirakjan-Stancu operators}, J. Numer. Anal. Approx. Theory, 45(1), (2016), pp. 27-36.

\bibitem{VNM1} Mishra~V.N., Khatri~K., Mishra~L.N., Deepmala; \emph{Inverse result in simultaneous approximation by Baskakov-Durrmeyer-Stancu operators}, Journal of Inequalities and Applications 2013, 2013:586. doi:10.1186/1029-242X-2013--586.

\bibitem{VNM2} Mishra~V.N., Patel~P.; \emph{The Durrmeyer type modification of the q-Baskakov type operators with two parameter $\alpha$ and $\beta$}, Numerical Algorithms, Vol. 67, Issue 4 (2014), pp. 753-769. DOI: 10.1007/s11075-013-9821-9. 
\end{thebibliography}
\end{document}